\documentclass[twoside,english,authoryear]{elsarticle}
\usepackage[T1]{fontenc}
\usepackage[latin9]{inputenc}
\pagestyle{headings}
\usepackage{verbatim}
\usepackage{units}
\usepackage{amsmath}
\usepackage{amsthm}
\usepackage{amssymb}
\usepackage{nomencl}

\providecommand{\printnomenclature}{\printglossary}
\providecommand{\makenomenclature}{\makeglossary}
\makenomenclature

\makeatletter
\theoremstyle{plain}
\newtheorem{thm}{\protect\theoremname}
\ifx\proof\undefined
\newenvironment{proof}[1][\protect\proofname]{\par
\normalfont\topsep6\p@\@plus6\p@\relax
\trivlist
\itemindent\parindent
\item[\hskip\labelsep\scshape #1]\ignorespaces
}{%
\endtrivlist\@endpefalse
}
\providecommand{\proofname}{Proof}
\fi

\usepackage{caption}

\captionsetup[table]{skip=8pt}



\makeatletter
\def\ps@pprintTitle{%
  \let\@oddhead\@empty
  \let\@evenhead\@empty
  \let\@oddfoot\@empty
  \let\@evenfoot\@oddfoot
}
\makeatother

\usepackage{fullpage}

\setlength{\baselineskip}{2pt}

\setlength{\parindent}{0pt}

\makeatother

\usepackage{babel}
\providecommand{\theoremname}{Theorem}

\begin{document}

\begin{frontmatter}{}

\title{A note on Brehm's extension theorem}

\author[TUC]{P.~Osinenko}

\ead{pavel.osinenko@etit.tu-chemnitz.de}

\address[TUC]{Laboratory for Automatic Control and System Dynamics; Technische Universit\"at Chemnitz, 09107 Chemnitz, Germany }

\begin{abstract}
Brehm's extension theorem states that a non--expansive map on a finite
subset of a Euclidean space can be extended to a piecewise--linear
map on the entire space. In this note, it is verified that the proof of
the theorem is constructive provided that the finite subset consists
of points with rational coordinates. Additionally, the initial non--expansive
map needs to send points with rational coordinates to points with
rational coordinates. The two--dimensional case is considered.
\end{abstract}

\begin{keyword}
constructive mathematics, extension, Euclidean space

\printnomenclature{}
\end{keyword}

\end{frontmatter}{}

\section{Introduction}

Brehm's extension theorem is stated as follows:

\begin{thm}
	Let $M$ be a finite subset of $\mathbb{R}^{n}$, and $\varphi:M\rightarrow\mathbb{R}^{m},m\le n$
	a map with the property that for any $a,b\in M$, the condition $\|\varphi(a)-\varphi(b)\|\le\|a-b\|$
	holds. Then, there exists a piecewise--linear map $f:\mathbb{R}^{n}\rightarrow\mathbb{R}^{m}$
	such that $\forall a\in M.f(a)=\varphi(a)$.
\end{thm}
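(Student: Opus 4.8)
The plan is to reduce the whole construction to a single geometric primitive — the one-point extension — and then to assemble it into a global piecewise--linear map by interpolation over a triangulation.

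First I would establish the \emph{one-point extension lemma}: if $\varphi$ is non--expansive on a finite set $N\subset\mathbb{R}^{n}$ and $p\in\mathbb{R}^{n}$ is arbitrary, then the intersection of closed balls
\[
K(p)\;=\;\bigcap_{a\in N}\overline{B}\bigl(\varphi(a),\,\|p-a\|\bigr)
\]
is nonempty, so that $\varphi$ may be extended to $N\cup\{p\}$ with any value $q\in K(p)$ while remaining non--expansive. This is the finite--dimensional Kirszbraun property. I would prove it via Helly's theorem: in $\mathbb{R}^{m}$ it suffices that every $m+1$ of the balls meet, and for such a subfamily I would run the standard minimal--scaling argument — take the least $t\ge 0$ for which $\bigcap_{a}\overline{B}(\varphi(a),\,t\|p-a\|)\ne\emptyset$, observe that the point $q^\ast$ of the intersection at $t=t^\ast$ lies in the convex hull of the active centers $\varphi(a)$, write $q^\ast$ and $p$ as matching convex combinations of the $\varphi(a)$ and the $a$, and compare squared distances to extract $t^\ast\le 1$ from the contraction hypothesis. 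In the two--dimensional setting this reduces to the intersection of at most three disks, which can be computed explicitly, and — crucially for the note — rational data yields a rational witness $q$.

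Next I would fix a triangulation $T$ of $\mathbb{R}^{n}$ whose vertex set $V$ contains $M$, enumerate $V\setminus M=\{p_{1},p_{2},\dots\}$, and extend $\varphi$ to all of $V$ by applying the one-point lemma repeatedly: having defined $\varphi$ on $M\cup\{p_{1},\dots,p_{j-1}\}$, I choose $\varphi(p_{j})\in K(p_{j})$ for that enlarged set. This keeps $\varphi$ non--expansive on the entire growing vertex set. I would then let $f$ be the affine interpolation of these vertex values on each simplex of $T$; by construction $f$ is piecewise--linear and $f|_{M}=\varphi$, which already settles the statement as written.

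The main obstacle I anticipate is upgrading $f$ to a genuinely distance--reducing map — the feature that makes the theorem Brehm's rather than a triviality. Non--expansiveness at the vertices controls only the edge directions of each simplex, not the operator norm of the affine piece, so naive interpolation can stretch interior directions. I would handle this by not interpolating a fixed triangulation but letting the combinatorics of $f$ be dictated by the sets $K(p)$ themselves: partition $\mathbb{R}^{n}$ into the polyhedral regions on which the family of tight constraints $\{a:\|p-a\|\text{ is active}\}$ is constant, and on each region select $f(p)$ as a fixed affine function landing in $K(p)$, for instance the point of $K(p)$ nearest the image of a region--dependent reference. Verifying that these affine selections have linear part of norm at most one, and that they agree across region boundaries, is where the real work lies; the two--dimensional case keeps the region geometry — bounded by the lines and conics coming from the disk boundaries — tractable and, over $\mathbb{Q}$, effectively computable.
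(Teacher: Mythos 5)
There is a genuine gap, and you have half-diagnosed it yourself. Your one-point lemma (Helly plus the minimal-scaling argument) is the correct finite Kirszbraun property, but it only yields a \emph{pointwise} non-expansive extension to a vertex set, and the affine interpolation you then perform is not a piecewise--linear map in the sense this paper uses the term. Here a piecewise--linear map is, by definition, a triangulation together with a \emph{motion} on each simplex --- every piece is a congruence --- and that is the entire content of Brehm's theorem (its original title speaks of extensions to \emph{piecewise congruent} mappings). Under the naive reading of ``piecewise--linear'' (merely affine on each simplex) the statement is trivial and both the non-expansiveness hypothesis and your Helly lemma are unnecessary: assign arbitrary values at the new vertices and interpolate. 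Under the intended reading your construction fails: the linear part of the interpolated map on a simplex is determined by the edge vectors and their images, and nothing in the choice $\varphi(p_{j})\in K(p_{j})$ makes it orthogonal; generically it contracts some directions and stretches others. Your sketched repair does not close this: the nearest-point selection into $K(p)=\bigcap_{a}\overline{B}(\varphi(a),\|p-a\|)$ is not piecewise affine in $p$ (the active-constraint strata are bounded by conics, as you note), and even an affine selection with linear part of norm at most one would give a short map, not a piecewise congruence. So the proposal proves either a triviality or the pointwise Kirszbraun theorem, but not the stated result.

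The paper's route is built to produce congruent pieces directly and is worth contrasting. It inducts on the number of points: given an extension $g$ of the first $n-1$ points, it forms the set $\Omega=\{x\in A:\|a_{n}-x\|<\|b_{n}-g(x)\|\}$ where $g$ lands too far from the required image $b_{n}$, shows $\Omega$ is star-shaped about $a_{n}$, and shows its boundary is a finite union of line segments (inside each triangle of $g$'s triangulation it lies on the perpendicular bisector of $a_{n}$ and the preimage of $b_{n}$ under the local motion). It then replaces $g$ on $\Omega$ by motions of the triangles spanned by $a_{n}$ and these boundary segments, each sending $a_{n}$ to $b_{n}$, and closes the remaining sectors $D_{k}$ between consecutive triangles by a rotation-and-fold construction. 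Every piece is a motion by construction, and in the rational planar case all points produced are algebraic, which is what makes the membership and inhabitedness tests decidable. If you want to salvage your approach, you would have to replace interpolation by such a folding construction cell by cell; the Helly lemma alone will not get you there.
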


Here, $\|\bullet\|$ denotes the Euclidean distance. A map with the
property described is also called \textbf{non--expansive}. The theorem
was first addressed by \citet{Kirszbraun1934-ext-thm} and \citet{Valentine1943-ext-thm},
and then revisited by \citet{Brehm1981-ext-thm}. A similar proof
can be found in \citet{Akopyan2008-constr-Kirszbraun} and \citet[p.~21]{Petrunin2014-PL-maps}.
In the present work, it is verified that the theorem admits a constructive
proof in the sense of Bishop's constructive mathematics \citep{Bishop1985-constr-analysis}
provided that $M$ and $\varphi(M)$ consist of points with rational
coordinates. Only the planar case $m=n=2$ is considered.

\section{Preliminaries}

In this section, selected basics of constructive mathematics are briefly discussed. For
a comprehensive description, refer, for example, to \citep{Bishop1985-constr-analysis,Bridges1987-varieties,Bridges2007-techniques,Ye2011-SF,Schwichtenberg2012-constr-analysis-witnesses}.
Bishop's constructive mathematics uses the notion of an \textbf{operation}
which is an algorithm that produces a unique result in a finite number
of steps for each input in its domain. For example, a \textbf{real}
number $x$ is a \emph{regular} Cauchy sequence of rational numbers
in the sense that 
\[
	\forall n,m\in\mathbb{N}.|x(n)-x(m)|\leq\frac{1}{n}+\frac{1}{m}
\]
where $x(n)$ is an \emph{operation} that produces the $n$th rational
approximation to $x$. A \textbf{set} is a pair of operations: $\in$
determines that a given object is a member of the set, and $=$ determines
whenever two given set members are equal. Existence and universal
quantifiers are interpreted as follows: $\exists x\in A.\varphi\left[x\right]$
means that an operation has been derived that constructs an instance
$x$ along with a proof of $x\in A$ and a proof of the logical formula
$\varphi\left[x\right]$ as \emph{witnesses}; $\forall x\in A.\varphi\left[x\right]$
means that an operation has been derived that proves $\varphi\left[x\right]$
for any $x$ provided with a witness for $x\in A$. A set $A$ is
called \textbf{inhabited }if there exists an $x\in A$. A \textbf{finite
set} is a set that admits a bijection to a set $\left\{ 1,2,...n\right\} $
for some $n\in\mathbb{N}$ which means that all its elements are enumerable.
The Euclidean space $\mathbb{R}^{n}$ is a normed space with the norm
$\|x\|\triangleq\left(\sum_{i=1}^{n} x_i^{2} \right)^{\nicefrac{1}{2}}$
where $x_i$ is the $i$--th coordinate of $x$. The metric is defined
as $\|x-y\|$ for any $x,y\in\mathbb{R}^{n}$. A point $x$ in the
Euclidean space is called \textbf{algebraic} if its coordinates are
algebraic numbers.

A (closed) \textbf{polytope }is a union of polyhedrons\textbf{ }whereas
a (closed) \textbf{polyhedron }is an inhabited set of points of the
Euclidean space satisfying linear inequalities $Ax\le b,A\in\mathbb{R}^{n\times n},b\in\mathbb{R}^{n\times1}$.
If the entries of $A$ and $b$ are solely algebraic numbers, then
the polyhedron is called \emph{algebraic}. If a polytope is a union
of solely algebraic polyhedrons, then it is algebraic as well. A polyhedron
(respectively, polytope) $P$ is \textbf{bounded} if there exists
a rational number $\bar{x}$ such that $\|x\|\le\bar{x}$ for any
$x$ in $P$. An $n$--dimensional \textbf{simplex} is a convex hull
of $n+1$ affinely independent points. A \textbf{triangulation} of
a bounded algebraic $n$--dimensional polyhedron $P$ is a finite
set $\left\{ T_{i}\right\} _{i}$ of algebraic simplices whose intersections
are at most $(n-1)$--dimensional and such that $P=\cup_{i}T_{i}$.
For example, a triangulation of a two--dimensional polyhedron is a
collection of non--degenerate triangles that may have a common vertex
or segment of an edge, but no two--dimensional intersection.

A \textbf{motion} in the plane $\mathbb{R}^{2}$ is a map $f$ that
is a composition of a translate, a rotation and a reflection. Clearly,
it is distance--preserving in the sense that $\|f(x)-f(y)\|=\|x-y\|$
for any $x,y$. Translate, rotation and reflection can be described
as linear transformations of the form $x\mapsto Tx$ where $T$ is
the transformation matrix. A motion can be thus described by a transformation
matrix as well. Notice that a motion is always invertible since the
corresponding transformation matrix is regular. A motion is called
\emph{algebraic} if the corresponding transformation matrix comprises
solely of algebraic entries. For example,
\[
	f(x)=\left[\begin{array}{cc}
	\sqrt{1-\frac{9}{25}} & \frac{3}{5}\\
	-\frac{3}{5} & \sqrt{1-\frac{9}{25}}
	\end{array}\right] x
\]
is an algebraic motion and describes the clockwise rotation by the
angle $\arcsin\frac{3}{5}$. In contrast, $f(x)=x+\pi$ is not an
algebraic motion. Any three algebraic points forming a non--degenerate
triangle can be moved by an algebraic motion to new algebraic points
preserving the respective distances \citep{Petrunin2014-PL-maps}.
An \textbf{algebraic piecewise--linear map} $f$ on a bounded algebraic
two--dimensional polyhedron $P$ is a pair of a triangulation $\left\{ T_{i}\right\} _{i}$
of the polyhedron and a collection of algebraic motions $\left\{ f_{i}\right\} _{i}$
such that $f|_{T_{i}}=f_{i}$. For example, folding of a piece of
paper without ripping can be considered as an algebraic piecewise--linear
map if foldings are performed at algebraic points. Notice that each
algebraic piecewise--linear map has a triangulation and a collection
of algebraic motions as witnesses. Clearly, an algebraic piecewise--linear
map is non--expansive. The notion of an algebraic piecewise--linear
map can be directly generalized to algebraic polytopes.

It is important to notice that, for arbitrary real numbers $x,y$,
it is not decidable whether $x=y$ or $x\ne y$. This limitation has
a number of consequences for the theory of the Euclidean space $\mathbb{R}^{n}$.
In particular, no full power of set operations is available. For
example, if $A$ and $B$ are arbitrary sets in $\mathbb{R}^{n}$,
it is not decidable whether $A\cap B=\emptyset$ or $A\cap B$ is inhabited.
In this note, set operations are limited to the class of sets of the form $\left\{ x:\bigvee_{i=1}^{N}\bigwedge_{j=1}^{M_{i}}\mathcal{E}_{ij}\right\} $
with $\mathcal{E}_{ij}$ being a formula of the type $A_{ij}x\bullet b_{ij}$
or $\|f_{ij}(x)\|\bullet\|g_{ij}(x)\|$ where ``$\bullet$'' denotes
``$<$'',''$\le$'' or ''$=$'' and $f_{ij}$ and $g_{ij}$
are algebraic piecewise--linear maps on algebraic polytopes. Denote this class by $\mathcal{AS}$. For example, an algebraic polytope
itself belongs to $\mathcal{AS}$. Further, the set complement of
a set $A\in\mathcal{AS}$, denoted by $\mathbb{R}^{n}\backslash A$
is, again, an element of the class $\mathcal{AS}$ (it can be done
by transforming the sign ``$<$'',''$\le$'' or ''$=$'' in
the respective formula). Notice that if $f_{ij}$ and $g_{ij}$ are
algebraic motions, each $\|f_{ij}(x)\|\le\|g_{ij}(x)\|$ is equivalent
to $\sum_{k=1}^{n}(f_{ij}(x))_{k}^{2}\le\sum_{k=1}^{n} (g_{ij}(x))^2_{k}$. The same applies
if $f_{ij}$ and $g_{ij}$ are algebraic piecewise--linear maps by
considering the inequalities on the simplices where $f_{ij}$ and
$g_{ij}$ are both algebraic motions. If a set $A$ has the form $\left\{ x:\bigwedge_{i=1}^{N}\|f_{i}(x)\|<\|g_{i}(x)\|\right\} $
with $f_{i}$ and $g_{i}$ algebraic piecewise--linear, then
its boundary is defined to be the set $\partial A\triangleq\left\{ x:\bigwedge_{i=1}^{N}\|f_{i}(x)\|=\|g_{i}(x)\|\right\} $
which in turn belongs to $\mathcal{AS}$. Lemma 4.1 from \citet[p.~8]{Beeson1980} states decidability of equality over
the field of algebraic real numbers. This allows performing the ordinary set operations
on the sets of the described class. In the following, the extension theorem is revisited and
verified to admit a constructive proof.

\section{Extension theorem}

The proof of the following theorem is mostly based on \citep{Akopyan2008-constr-Kirszbraun,Petrunin2014-PL-maps}.

\begin{thm}
	Let $\left\{ a_{i}\right\} _{i=1}^{n},\left\{ b_{i}\right\} _{i=1}^{n}$
	be finite subsets of points in $\mathbb{R}^{2}$ with rational coordinates
	such that $\forall i,j.\|b_{i}-b_{j}\|\le\|a_{i}-a_{j}\|$. Let $A$
	be the convex hull of $\left\{ a_{i}\right\} _{i=1}^{n}$. Then, there
	exists an algebraic piecewise--linear map $f:A\rightarrow\mathbb{R}^{2}$
	such that $\forall i.f(a_{i})=b_{i}$.
\end{thm}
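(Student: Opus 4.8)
The plan is to reduce the construction to a finite sequence of elementary algebraic maps acting on the \emph{target} copy of $\mathbb{R}^{2}$ and then to restrict their composition to $A$. Call an \textbf{algebraic folding} along an algebraic line $\ell$ the map that is the identity on one closed half--plane bounded by $\ell$ and the reflection across $\ell$ on the other; since $\ell$ is algebraic this is an algebraic piecewise--linear map, and a one--line computation shows it never increases any pairwise distance: if a point on the reflected side lies at signed distance $-h$ ($h>0$) from $\ell$ and another at signed distance $\beta\ge 0$ on the fixed side, the squared distances differ by $(h-\beta)^{2}-(h+\beta)^{2}=-4h\beta\le 0$, while pairs on the same side keep their distance. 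I would then show it suffices to produce algebraic maps $g_{1},\dots,g_{N}:\mathbb{R}^{2}\to\mathbb{R}^{2}$, each either an algebraic motion or an algebraic folding, with $(g_{N}\circ\cdots\circ g_{1})(a_{i})=b_{i}$ for every $i$; the map $f:=(g_{N}\circ\cdots\circ g_{1})\big|_{A}$ is then the one sought. A composition of algebraic piecewise--linear maps is again algebraic piecewise--linear --- the common refinement of the triangulations is obtained by intersecting simplices, which is admissible because equality over the algebraic reals is decidable by Lemma~4.1 of \citet{Beeson1980} and all sets involved lie in $\mathcal{AS}$ --- and $f$ is automatically non--expansive.

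To build the sequence I would run an iterative folding procedure on the current images $c_{i}:=(g_{k}\circ\cdots\circ g_{1})(a_{i})$, starting from $c_{i}=a_{i}$ and maintaining the invariant $\|c_{i}-c_{j}\|\ge\|b_{i}-b_{j}\|$ for all $i,j$, which holds initially by hypothesis and is preserved because motions and foldings are non--expansive. At each step I first test whether the current configuration is congruent to $\{b_{i}\}$, that is whether $\|c_{i}-c_{j}\|=\|b_{i}-b_{j}\|$ for all $i,j$; this comparison of algebraic quantities is decidable. If it holds, a single algebraic motion carries $\{c_{i}\}$ onto $\{b_{i}\}$ --- three images spanning a non--degenerate triangle determine it by the quoted fact that non--degenerate algebraic triangles can be moved to prescribed congruent algebraic positions by an algebraic motion, the collinear case being handled separately --- and the construction terminates. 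Otherwise the comparison exhibits a pair $(i,j)$ with $\|c_{i}-c_{j}\|>\|b_{i}-b_{j}\|$, and I would apply a folding that reduces $\|c_{i}-c_{j}\|$ exactly to $\|b_{i}-b_{j}\|$: sliding an algebraic line transverse to the segment $c_{i}c_{j}$ varies the folded distance continuously from its current value down past the target, and solving the resulting algebraic equation pins down an algebraic fold line achieving equality.

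The hard part will be choosing each fold so that the procedure is guaranteed to terminate after finitely many steps. The potential $\sum_{i<j}\bigl(\|c_{i}-c_{j}\|^{2}-\|b_{i}-b_{j}\|^{2}\bigr)\ge 0$ is non--increasing under folds, which gives monotone progress, but turning monotonicity into \emph{finite} termination requires a combinatorial monovariant: the natural candidate is the number of \emph{taut} pairs, those with $\|c_{i}-c_{j}\|=\|b_{i}-b_{j}\|$. I would aim to show that at every non--terminal step the fold line $\ell$ can be selected so that it makes the chosen slack pair taut while (i) leaving every already--taut pair unchanged --- which forces each taut pair to lie entirely on one side of $\ell$, so its two endpoints are both fixed or both reflected and hence keep their distance --- and (ii) not contracting any pair below its target distance, so the invariant survives. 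Establishing that such an algebraic $\ell$ always exists while the configuration is not yet congruent to $\{b_{i}\}$ is the crux of the argument and the point at which the planar hypothesis $m=n=2$ is used; granting it, the taut count strictly increases, so after at most $\binom{n}{2}$ folds all pairs are taut and the final motion completes the map. What remains is constructive bookkeeping: every line, motion and fold produced is algebraic, every case distinction --- side of a line, equality or strict inequality of distances --- is decidable over the algebraic reals and stays within $\mathcal{AS}$, so the whole procedure is an operation in Bishop's sense, producing the triangulation and the collection of motions that witness $f$.
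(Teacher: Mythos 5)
Your proposal takes a genuinely different route from the paper --- a global greedy folding scheme on the whole configuration, versus the paper's induction on the number of points (extend a map $g$ already correct on $a_{1},\dots,a_{n-1}$ by analysing the star--shaped region $\Omega=\{x\in A:\|a_{n}-x\|<\|b_{n}-g(x)\|\}$, re--mapping the cone over $\partial_{A}\Omega$ from $a_{n}$ by motions sending $a_{n}\mapsto b_{n}$, and folding the leftover sectors $D_{k}$). But as written your argument has a genuine gap, and you have located it yourself: the claim that at every non--terminal step there exists a fold line $\ell$ which (a) contracts the chosen slack pair exactly to its target distance, (b) does not strictly separate any already--taut pair, and (c) does not push any other pair below its target, is asserted and labelled ``the crux'' but never proved. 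This is not bookkeeping; it is essentially the entire content of the theorem. The three requirements pull against each other: making $(i,j)$ taut forces $\ell$ to strictly separate $c_{i}$ from $c_{j}$ at a prescribed ``depth,'' while the taut graph --- which grows as the algorithm runs and may become connected and rigid --- forbids $\ell$ from strictly separating an increasing family of pairs, and condition (c) imposes further open constraints on every remaining slack pair. Nothing in the proposal rules out the algorithm reaching a state where every admissible $\ell$ violates one of these, and the monovariant argument (taut count increases, so at most $\binom{n}{2}$ steps) is vacuous until the existence lemma is supplied. Note also that a single fold is a very rigid object (one line, one global reflection), whereas the paper's construction is forced, even in its final step on a single sector $D_{k}$, to combine a motion with a fold about a ray chosen from the chord midpoint --- an indication that one global fold per step is unlikely to suffice in general.

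The surrounding scaffolding is fine and in places careful: the non--expansiveness computation for a fold, the reduction to a finite composition of motions and folds restricted to $A$, the preservation of the invariant $\|c_{i}-c_{j}\|\ge\|b_{i}-b_{j}\|$, and the appeal to decidability of equality over the algebraic reals for all case distinctions are all correct and consonant with the paper's constructive framework. If you want to salvage the folding picture, the workable version is much closer to the paper's: do not try to fix one pair of a global configuration per fold, but add one point at a time and repair the map only on the region where it over--contracts relative to that new point; there the relevant fold lines are the perpendicular bisectors $\{x:\|x-a_{n}\|=\|x-c_{n}\|\}$ appearing triangle by triangle, and star--shapedness of $\Omega$ is what guarantees the pieces glue into a piecewise--linear map.
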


\begin{proof}
	The theorem is proven by induction on the number of points. If $n=1$,
	one may take $f(x):=x+(b_{1}-a_{1})$ which is clearly an algebraic
	motion on the entire space. Suppose that an algebraic piecewise--linear
	map $g:A\rightarrow\mathbb{R}^{2}$, such that $\forall i=1,\dots,n-1.g(a_{i})=b_{i}$,
	was constructed. Define a set $\Omega:=\left\{ x:x\in A\wedge\|a_{n}-x\|<\|b_{n}-g(x)\|\right\} $.
	Since $g$ is algebraic, it is decidable whether $b_{n}=g(a_{n})$ or
	$b_{n}\ne g(a_{n})$. In the former case, take $f$ to be $g$.
	In the latter, $\Omega$ is inhabited since $a_{n}\in\Omega$. Notice that if
	$x$ belongs to $\Omega$, then so does the line segment
	between $a_{n}$ and $x$. Take a point $y$ in this line segment.
	Then, 
	\[
		\|a_{n}-y\|+\|y-x\|=\|a_{n}-x\|.
	\]
	Since $x\in\Omega$,
	\[
		\|a_{n}-x\|\le\|b_{n}-g(x)\|.
	\]
	The map $g$ is an algebraic piecewise--linear which implies
	\[
		\|g(x)-g(y)\|\le\|x-y\|.
	\]
	Therefore,
	\begin{equation}
		\begin{aligned}
			\|a_{n}-y\| & =\|a_{n}-x\|-\|y-x\|\\
			 & <\|b_{n}-g(x)\|-\|g(x)-g(y)\|\\
			 & \le\|b_{n}-g(y)\|
		\end{aligned}
	\label{eq:star-shaped-property}
	\end{equation}
	where the last line follows from the triangle inequality $\|g(x)-g(y)\|\le\|b_{n}-g(y)\|+\|b_{n}-g(y)\|$.
	Since $\|a_{n}-y\|<\|b_{n}-g(y)\|$, it follows that $y\in\Omega$.
	Now, the boundary $\partial_{A}\Omega:=\partial\Omega\cap A$
	is inspected. Let $\left\{ T_{i}\right\} _{i}$ be the triangulation
	of $A$ such that $g$ on each triangle is a motion $g_{i}$. Let
	$c_{n}:=g_{i}^{-1}(a_{n})$. Notice that $c_{n}$ is an algebraic
	point. Since $g_{i}$ is a motion and $g|_{T_{i}}=g_{i}$, for any
	$x\in T_{i}$, it follows that%
	%
	\begin{equation}
		\|c_{n}-x\|=\|b_{n}-g(x)\|.
		\label{eq:motion-on-triangle}
	\end{equation}
	Since $\Omega$ and $T_{i}$ belong to $\mathcal{AS}$, it is decidable
	whether the intersection $\Omega\cap T_{i}$ is inhabited. Suppose
	it is inhabited. Then, consider the line
	\[
		l_{i}:=\left\{ x:\|x-a_{n}\|=\|x-c_{n}\|\right\} .
	\]
	It follows that:
	\begin{equation}
		\partial_{A}\Omega\cap T_{i}=\left\{ x:\|a_{n}-x\|=\|b_{n}-g(x)\|\land x\in T_{i}\land x\in A\right\} \label{eq:omega-boundary-in-triangle}
	\end{equation}
	and 
	\begin{equation}
		l_{i}\cap T_{i}\cap A=\left\{ x:\|x-a_{n}\|=\|x-c_{n}\|\land x\in T_{i}\land x\in A\right\}.
		\label{eq:line-in-triangle}
	\end{equation}
	Matching \eqref{eq:omega-boundary-in-triangle} with \eqref{eq:line-in-triangle}
	using \eqref{eq:motion-on-triangle}, one can see that $\partial_{A}\Omega\cap T_{i}$
	is a line segment. Since $\left\{ T_{i}\right\} _{i}$ is a finite
	set, $\partial_{A}\Omega$ is a finite collection of line segments.
	Consider a line segment $\omega_{i}$ of $\partial_{A}\Omega$. Let
	$\tau_{i}$ be the triangle formed by $a_{n}$ and $\omega_{i}$.
	Let $f_{i}$ be an algebraic motion that maps $a_{n}$ to $b_{n}$
	and the endpoints of $\omega_{i}$ to their respective positions under
	$g_{i}$. For $x\in\omega_{i}$, it follows that $g(x)=g_{i}(x)$ and so $g(x)=f_{i}(x)$.
	Let $f|_{\tau_{i}}:=f_{i}$ and $f|_{A\backslash\Omega}:=g$. Further,
	since $\partial\Omega,\partial_{A}\Omega\in\mathcal{AS}$, it is decidable
	whether $\Delta:=\partial\Omega\cap\partial_{A}\Omega$ is
	inhabited. If this is the case (for otherwise, the result is trivial), consider
	the algebraic polytopes $D_{k},k=1,\dots,m$ formed by the endpoints
	of $\Delta$ that lie on $\partial\Omega\cap A$ and the line segments
	from these endpoints to $a_{n}$. Let $\lambda_{1}$ and $\lambda_{2}$
	denote the said endpoints for some algebraic polytope $D_{k}$. Since
	$f$ coincides with $g$ on the line segments $\left[a_{n},\lambda_{1}\right]$
	and $\left[a_{n},\lambda_{2}\right]$, and, moreover, it acts as algebraic
	motions on these line segments, and since $g$ is non--expansive,
	it follows that
	\[
		\begin{aligned}
			\|\lambda_{1}-a_{n}\|= & \|g(\lambda_{1})-b_{n}\|,\\
			\|\lambda_{2}-a_{n}\|= & \|g(\lambda_{2})-b_{n}\|\\
			\|g(\lambda_{1})-g(\lambda_{2})\|\le & \|\lambda_{1}-\lambda_{2}\|.
		\end{aligned}
	\]
	The required map on $D_{k}$ can be constructed as follows. Translate
	and rotate $D_{k}$ so that $\left[a_{n},\lambda_{1}\right]$ coincides
	with $\left[b_{n},g(\lambda_{1})\right]$. This can be done since
	the initial and the new vertices of $D_{k}$ are algebraic. So far,
	the line segment $\left[a_{n},\lambda_{2}\right]$ ``turned'' around
	$b_{n}$ closer to $\left[a_{n},\lambda_{1}\right]$. Draw a line
	segment from $g(\lambda_{1})$ to $g(\lambda_{2})$ which is the chord
	of the circle on which the point $\lambda_{2}$ slid to the new position.
	Take the middle point of the chord and fold $D_{k}$ around the ray
	going from $a_{n}$ to this middle point so that $\lambda_{2}$ matches
	with $g(\lambda_{2})$. The resulting map is thus constructed by translating
	and rotating the whole $D_{k}$ and then reflecting the fragment---to--fold
	around the said ray which constitutes a piecewise--linear map. This
	map is clearly algebraic since all the points involved are algebraic.
\end{proof}

\appendix

\bibliographystyle{apalike}

\end{document}